\definecolor{darkgreen}{rgb}{0.,0.5,0.}
\DeclareMathOperator{\CF}{CF}
\DeclareMathOperator{\se}{\mathbf{s}(\boldsymbol\varepsilon)}
\numberwithin{equation}{section} \overfullrule 5pt
\newtheorem{thm}{Theorem}[section]
\newenvironment{thmbis}[1]
  {%
   \addtocounter{thm}{-1}%
   \begin{thm}}
  {\end{thm}}
\newtheorem{prop}[thm]{Proposition}
\newtheorem{lem}[thm]{Lemma}
\theoremstyle{definition}
\newtheorem{eg}{Example}
\newtheorem{remark}{Remark}
\newcommand{\ff}{\mathbb{F}}
\title[
    ALGEBRAIC automatic CONTINUED FRACTIONS IN characteristic~$2$
\date{}
]
{
    ON A FAMILY OF $2$-AUTOMATIC SEQUENCES GENERATING ALGEBRAIC CONTINUED FRACTIONS IN characteristic~$2$}
\date{}
\author{Yining Hu}
\address[Yining Hu]{School of Mathematics and Statistics, 
Huazhong University of Science and Technology, Wuhan, PR China}
\email{huyining@protonmail.com}
\thanks{
 This work is supported by the National Science Foundation
of China grant 12001216.
}
\subjclass[2010]{11B85, 11J70, 11B50, 11Y65, 05A15, 11T55}
\keywords{algebraicity, automatic sequence, continued fraction, period-doubling sequence}
\begin{document}
\begin{abstract}
	We present family of automatic sequences that define algebraic continued
	fractions in characteristic $2$. This family is constructed from 
	ultimately period words and contains the period-doubling sequence.
\end{abstract}

\maketitle
\section{Introduction}
Ultimately periodic sequences are the simplest of all sequences. In this article
we study a family $\mathcal{S}$ of automatic sequences generated by ultimately periodic 
sequences and the continued fractions defined by them.

	 Each sequence $\bf{s}$ in $\mathcal{S}$ is built from  
	 an ultimately periodic sequence $(\varepsilon_n)_n=p_0\ldots p_{l-1} 
	 (e_0\ldots e_{d-1})^\infty$ in the following way.
   Starting from the empty word $W_0$, we consider the sequence of words 
	 $(W_n)_{n\geq0}$ such that $W_{n+1}=W_n,\varepsilon_n,W_n$ 
	 (note the comma is for concatenation and it will be omitted when it is 
	 suitable).
   Hence, we have $W_1=\varepsilon_0$, $W_2= \varepsilon_0,\varepsilon_1,\varepsilon_0$, 
	 etc. 
    Note that, for all $n\geq1$, $W_n$ is a palindrome, centred in $\varepsilon_n$,
		of length $2^n-1$.

	 By construction, the word $W_{n+1}$ starts by $W_n$ and 
	 and we define $\mathbf{s}(\boldsymbol\varepsilon)$ to be the limit of $(W_n)$.
	  Hence, we have
	 $$\mathbf{s}(\boldsymbol\varepsilon)=\varepsilon_0,\varepsilon_1,\varepsilon_0,\varepsilon_2,\varepsilon_0,\varepsilon_1,\varepsilon_0,\varepsilon_3,\varepsilon_0,\varepsilon_1,\varepsilon_0,\varepsilon_2,.... $$ 

	 We illustrate the construction of $\mathbf{s}(\boldsymbol\varepsilon)$ with two basic cases.

   1) $\boldsymbol\varepsilon=a,b,(c)^\infty$  then 
	 $$\mathbf{s}(\boldsymbol\varepsilon)=a,b,a,c,a,b,a,c,....=(a,b,a,c)^\infty.$$
   Here $\boldsymbol\varepsilon$ is ultimately constant  and 
	 $\mathbf{s}(\boldsymbol\varepsilon)$ is periodic. Consequently a basic property on continued 
	 fractions shows that $\CF(\mathbf{s}(\boldsymbol\varepsilon))$ is quadratic over $\ff_2(t)$.

   2) $\boldsymbol\varepsilon= (a,b)^\infty $   then 
	 $$\mathbf{s}(\boldsymbol\varepsilon)=a,b,a,a,a,b,a,b,a,b,...$$
	 Here  $\mathbf{s}(\boldsymbol\varepsilon)$ is the celebrated period-doubling sequence. 
   It has been proved that $\CF(s(\boldsymbol\varepsilon))$ satisfies an algebraic 
	 equation of degree $4$ (see \cite{Hu2022H, Hu2022L}).

Each sequence in $\mathcal{S}$ is $2$-automatic. The definition and relevant
results about automatic sequences are briefly recalled in Section \ref{ss:aut}.

Continued fractions defined by two classical example of automatic sequences 
in characteristic $2$ were studied in \cite{Hu2022H, Bugeaud2022H, Hu2022L},
where it was established that the Thue-Morse and period-doubling sequences
both define algebraic continued fractions of degree $4$. 
On the other hand, both sequences define quadratic formal power series. 
The situation is different from the real case:
Bugeaud \cite{Bugeaud2013} proved in 2013 that the continued
fraction expansion of an algebraic real number of degree at least $3$ is
not automatic.

Our family $\mathcal{S}$ is a natural generalization of the period-doubling 
sequence. We prove in this article that every sequence from this family 
defines an algebraic continued fraction in characteristic $2$.

In Section \ref{ss:1} we give the notation necessary for stating our
main result in \ref{ss:2}. 
In Section \ref{sec:proof} we prove the main theorem, and in Section 
\ref{sec:eg} we give three examples. Finally, in Section \ref{sec:ric} we
make a comment on the link between continued fractions and Riccati differential
equations.

\subsection{Continued Fractions in Power Series Rings}\label{ss:1}
Let $A=\{a_0,\ldots, a_k\}$ be a finite alphabet. We treat $a_j$ as formal
variables and define
\begin{align*}
	\ff_2[A]:=\ff_2[a_0,\ldots, a_k]\\
	\ff_2(A):=\ff_2(a_0,\ldots, a_k)\\
	\ff_2((A)):=\ff_2((\frac{1}{a_0},\ldots, \frac{1}{a_k})).
\end{align*}
Here	$\ff_2((\frac{1}{a_0},\ldots, \frac{1}{a_k}))$ denotes the ring of
power series of the form
\begin{equation}\sum_{n_0, \ldots, n_k \geq N} c_{n_0,n_1,\ldots, n_k} a_0^{-n_0}\cdots {a_k}^{-n_k},\label{eq:star}\end{equation}
where $N$ is an integer and $c_{n_0,n_1,\ldots, n_k}\in \ff_2$.

Let $(u_n)_{n\geq 0}$ be a sequence taking values in $A$. It defines a
formal power series $\sum_{n\geq 0} u_n z^n$ in $\ff_2[A][[z]]$.

We define a norm on $\ff_2((A))$ by assigning a series of the form 
\eqref{eq:star}
the number $2^{-m}$, where $m=\min\{ n_0+n_1 \cdots+ n_k \mid \ c_{n_0,n_1,
\ldots, n_k}\neq 0\}$ with the convention that $\min\emptyset=\infty$. This norm makes  $\ff_2((A))$ an ultrametric space.

The continued fraction $\CF(\mathbf{u})=[u_0,u_1,\ldots]$ is defined as 
the limit of the sequence $([u_0,u_1,\ldots, u_n])_n$:
\begin{align*}
	[u_0]&=u_0,\\
	[u_0, u_1, \ldots, u_{n}]&=u_0+\frac{1}{[u_1,\ldots, u_n]}\in \ff_2((A)),
\end{align*}
for $n\geq 1$. 
For example, 
\begin{align*}
	[u_0,u_1,u_2]&=u_0+\cfrac{1}{u_1+\frac{1}{u_2}}\\
	&=u_0+\frac{u_1^{-1}}{1+(u_1u_2)^{-1}}\\
	&=u_0+u_1^{-1}+u_1^{-2}u_2^{-1}+u_1^{-3}u_2^{-2}+\cdots
\end{align*}
The convergence of this sequence is proved in the same way
as in the case of classical contineud fraction for real numbers.

We may also choose to specialize the letters in $A$ to non-constant polynomials
in $\ff_2[z]$ (this gives a continuous map from $\ff_2((A))$ to $\ff_2((1/z))$),
and then $\CF({\mathbf{u}})$ can be seen as formal power series in $\ff_2((1/z))$.
	 For basic information on continued fractions, particularly in power series rings, the reader is refered to \cite{Lasjaunias2017}.

\subsection{Main result}\label{ss:2}
We prove the following theorem.

	 \begin{thm}\label{thm:main}
		 Let $\boldsymbol\varepsilon=p_0\ldots p_{l-1}(e_0\ldots e_{d-1})^\infty$ be an 
		 ultimately periodic sequence over the alphabet $A$.

		 (i) The continued fraction $\CF(\se)$ is algebraic of degree at
		 most $2^d$ over $\ff_2(A)$. It is of degree $2^d$ if $e_0$ is different
		 from the other letters.

		 (ii) The formal power series 
	$$
		 F:=\sum_{j\geq 0} s_j z^j \in \ff_2(A)[[z]].
	$$ 
		 is algebraic of degree at most $2^{d-1}$ over $\ff_2(A)(z)$. 
		 It is of degree $2^{d-1}$ if $e_0$ is different from the other letters.

	 \end{thm}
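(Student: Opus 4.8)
My plan is to run both parts off one mechanism: a $2\times2$ matrix (continuant) recursion extracted from $W_{n+1}=W_n\varepsilon_nW_n$ together with the palindrome property of the $W_n$, and then to feed the resulting self-similarity into the Frobenius $x\mapsto x^2$ so as to convert a Mahler-type relation into a genuine algebraic equation.

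\emph{Part (i).} Attach to $W_n$ (a word of length $2^n-1$) the matrix $M(W_n)=\prod_{i=0}^{2^n-2}\left(\begin{smallmatrix}s_i&1\\1&0\end{smallmatrix}\right)=\left(\begin{smallmatrix}P_n&\ast\\Q_n&\ast\end{smallmatrix}\right)$, so that $[W_n]=P_n/Q_n$ and $\CF(\se)=\lim_nP_n/Q_n$. Since $W_n$ is a palindrome $M(W_n)$ is symmetric; substituting $\left(\begin{smallmatrix}P_n&Q_n\\Q_n&\widetilde{Q}_n\end{smallmatrix}\right)$ into $M(W_{n+1})=M(W_n)\left(\begin{smallmatrix}\varepsilon_n&1\\1&0\end{smallmatrix}\right)M(W_n)$ and using $\det M(W_n)=1$ in characteristic $2$ gives, after a short computation,
\[
P_{n+1}=\varepsilon_nP_n^{2},\qquad Q_{n+1}=\varepsilon_nP_nQ_n+1 .
\]
Hence $(Q_{n+1}+1)/P_{n+1}=Q_n/P_n$, which telescopes (from $Q_1/P_1=1/\varepsilon_0$) to $Q_n/P_n=\sum_{k=1}^{n}P_k^{-1}$, so
\[
\CF(\se)^{-1}=\sum_{k\ge1}P_k^{-1},\qquad P_k=\prod_{j=0}^{k-1}\varepsilon_j^{\,2^{k-1-j}},
\]
the series converging in $\ff_2((A))$ because $\deg P_k=2^k-1$.

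\emph{Part (i), continued.} Squaring term by term (Frobenius) and using $P_k^2=P_{k+1}/\varepsilon_k$ repeatedly gives, for each $r\ge0$,
\[
\CF(\se)^{-2^{r}}=\sum_{k\ge1}\lambda^{(r)}_k\,P_{k+r}^{-1},\qquad \lambda^{(r)}_k=\prod_{i=0}^{r-1}\varepsilon_{k+i}^{\,2^{r-1-i}}.
\]
As $\boldsymbol\varepsilon$ is ultimately periodic, for $r\le d$ and $k$ large the monomial $\lambda^{(r)}_k$ depends only on $k\bmod d$. Thus in a combination $\sum_{r=0}^{d}c_r\CF(\se)^{-2^{r}}$ with $c_r\in\ff_2(A)$, the coefficient of $P_m^{-1}$ is, for $m$ large, an $\ff_2(A)$-linear form in $(c_0,\dots,c_d)$ depending only on $m\bmod d$: $d$ homogeneous conditions on $d+1$ unknowns, hence a nonzero solution $(c_0,\dots,c_d)$ exists; for it only finitely many terms survive, so $\sum_{r=0}^{d}c_r\CF(\se)^{-2^{r}}\in\ff_2(A)$. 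This additive polynomial relation has degree $\le2^d$, so $\CF(\se)^{-1}$, and therefore $\CF(\se)$, is algebraic of degree at most $2^d$ over $\ff_2(A)$.

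\emph{Part (ii).} The same doubling gives, from $s_{2m}=\varepsilon_0$ and $s_{2m+1}=(\mathbf{s}(\sigma\boldsymbol\varepsilon))_m$ ($\sigma$ the shift), the functional equation $F_{\boldsymbol\varepsilon}(z)=\frac{\varepsilon_0}{(1+z)^2}+z\,F_{\sigma\boldsymbol\varepsilon}(z^2)$; iterating $l$ times, $F_{\boldsymbol\varepsilon}=R(z)+z^{2^l-1}F_{\boldsymbol\eta}(z^{2^l})$ with $R\in\ff_2(A)(z)$ and $\boldsymbol\eta=(e_0\cdots e_{d-1})^\infty$. For $\boldsymbol\eta$ one has $s_m=e_{\nu_2(m+1)\bmod d}$; grouping by the class of $\nu_2(m+1)$ modulo $d$,
\[
F_{\boldsymbol\eta}(z)=\sum_{j=0}^{d-1}e_j\,g_j(z),\qquad g_j(z)=\sum_{\nu_2(m+1)\equiv j}z^m=\frac1z\sum_{k\ge0}\frac{z^{2^{j+kd}}}{(1+z^{2^{j+kd}})^2},
\]
each $g_j$ having coefficients in $\ff_2$. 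With $\psi(z)=\sum_{k\ge0}z^{2^{kd}}/(1+z^{2^{kd}})^2\in\ff_2[[z]]$ one gets $g_j=\psi^{2^j}/z$ (Frobenius), so $F_{\boldsymbol\varepsilon}\in\ff_2(A)(z)(\psi)$ and it remains to bound $[\ff_2(z)(\psi):\ff_2(z)]$. From its series $\psi$ satisfies $\psi(z)=\frac{z}{(1+z)^2}+\psi(z^{2^d})$, and since $\psi$ has $\ff_2$-coefficients, $\psi(z^{2^d})=\psi(z)^{2^d}$; hence $\psi^{2^d}+\psi+\frac{z}{(1+z)^2}=0$ over $\ff_2(z)$. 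Using $X^{2^d}+X=Y^2+Y$ with $Y=\sum_{i=0}^{d-1}X^{2^i}$ and $\frac{z}{(1+z)^2}=t^2+t$ with $t=\frac{z}{1+z}$, this factors as
\[
X^{2^d}+X+\frac{z}{(1+z)^2}=\Bigl(\sum_{i=0}^{d-1}X^{2^i}+t\Bigr)\Bigl(\sum_{i=0}^{d-1}X^{2^i}+t+1\Bigr),
\]
so $\psi$ (which satisfies the first factor, being $0$ at $z=0$) has degree $\le2^{d-1}$ over $\ff_2(z)$, whence $F_{\boldsymbol\varepsilon}$ has degree $\le2^{d-1}$ over $\ff_2(A)(z)$.

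\emph{Exact degrees, and the main obstacle.} For (ii): the factor $\sum_{i<d}X^{2^i}+t$ is irreducible over $\ff_2(z)$, since at $z=1$ (put $w=z+1$, so $t=1+w^{-1}$ has a pole of order $1$) the relation $\sum_{i<d}\psi^{2^i}=t$ forces $v(\psi)=-1/2^{d-1}$, i.e. total ramification of degree $2^{d-1}$, and this is unchanged after adjoining the transcendental letters. Writing $F_{\boldsymbol\varepsilon}=R(z)+\tfrac1zL_e(\psi)$ with the additive polynomial $L_e(X)=\sum_{j<d}e_jX^{2^{l+j}}$, the conjugates of $F_{\boldsymbol\varepsilon}$ over $\ff_2(A)(z)$ are $F_{\boldsymbol\varepsilon}+\tfrac1zL_e(v)$ as $v$ ranges over the $2^{d-1}$ roots of $\sum_{i<d}X^{2^i}$; when $e_0$ is algebraically independent from the other letters one checks (kill the $j=0$ term, compare coefficients of $e_0$) that $L_e$ and $\sum_{i<d}X^{2^i}$ share no nonzero root, so these conjugates are distinct and the degree is exactly $2^{d-1}$. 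For (i) one analogously wants the normalised additive relation of degree $2^d$ for $\CF(\se)^{-1}$ to be irreducible over $\ff_2(A)$ when $e_0$ is fresh; I expect this from a ramification/Newton-polygon analysis of the $c_r$ at the place of $\ff_2(A)$ carrying the norm, and it is the step I expect to be the most delicate.
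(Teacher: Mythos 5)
Your proposal proves the two upper bounds and the exact degree in part (ii), in places by a genuinely different route from the paper: for (i) you obtain the additive relation $\sum_{r=0}^{d}c_r\,\CF(\se)^{-2^r}\in\ff_2(A)$ non-constructively, by a dimension count over the residues of $m$ modulo $d$, where the paper instead groups the series $\sum_{k>l}P_k^{-1}$ into $d$ subseries and writes the equation down explicitly; for (ii) your reduction to the single series $\psi$ with $\sum_{i=0}^{d-1}\psi^{2^i}=z/(1+z)$, its irreducibility via total ramification above $z=1$, and the distinctness of the conjugates $F+\frac{1}{z}L_e(v)$ (the kernel of $L_e$ meets the roots of $\sum_{i<d}X^{2^i}$ trivially because $e_0$ occurs only in the $j=0$ term) is a correct, self-contained substitute for the paper's position lemma, its Eisenstein step, and its specialization $e_0\mapsto 1$, other letters $\mapsto 0$.

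The genuine gap is the second sentence of part (i): that $\CF(\se)$ has degree exactly $2^d$ when $e_0$ differs from the other letters. You explicitly defer this to a ``ramification/Newton-polygon analysis of the $c_r$'', but your $c_r$ are only known to exist as a solution of an underdetermined linear system, so there is nothing concrete to analyse; moreover nothing in that construction distinguishes the case where $e_0$ is repeated (where the true degree drops, e.g.\ when $\boldsymbol\varepsilon$ is ultimately constant written with $d>1$) from the case where $e_0$ is fresh, so some new input is unavoidable. The step can be supplied inside your own framework by making the relation explicit: set $G:=\sum_{k>l}P_k^{-1}$ and $G_n:=\sum_{k\ge0}P_{l+n+kd}^{-1}$; the identity $P_k^{-2}=\varepsilon_k P_{k+1}^{-1}$ gives $G_n=G_{n-1}^2/e_{n-1}$ and $G_0=P_l^{-1}+G_{d-1}^2/e_{d-1}$, hence $G_0=P_l^{-1}+G_0^{2^d}/(e_{d-1}e_{d-2}^2\cdots e_0^{2^{d-1}})$, and $G$ is an explicit combination of $G_0, G_0^2,\dots,G_0^{2^{d-1}}$ over $\ff_2(A)$. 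The paper then gets the lower bound in two moves you would still need: (a) when $e_0$ is distinct, $G_0$ lies in the $2$-kernel of $G$ viewed as a multivariate power series in the inverse letters, so by the Cartier-operator lemma the degree of $G$ is at least that of $G_0$; (b) specializing every letter to $e_0$ sends $G_0$ to $e_0\sum_{k\ge l}e_0^{-2^{kd}}$, whose degree is $2^d$ because $g=\sum_{k\ge0}e_0^{-2^{kd}}$ satisfies the Eisenstein-irreducible equation $e_0g^{2^d}+e_0g+1=0$, and specialization cannot increase degree. Without (a) and (b), or an equivalent irreducibility argument for the explicit equation of $G_0$ over $\ff_2(A)$ (in the spirit of your ramification argument for $\psi$), the assertion ``degree exactly $2^d$'' in (i) remains unproved in your write-up.
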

\begin{remark}
For the formal power series in 
in Theorem \ref{thm:main} and in the examples, we chose to keep the letters of 
an automatic sequence as formal variables instead of mapping them to elements
in $\ff_p$, because in the latter case, the formal power series and 
	consequently its minimal polynomial depend on the mapping.
	 For an automatic sequence $\bf{u}$ taking values $a_1,\ldots, a_k$, 
it is always possible treat the letters as formal variables and get 
	a minimal polynomial in $\ff_p[a_1,\ldots,a_k,x][y]$. This is because $F$ 
can be decomposed into a finite sum $\sum a_n F_n$, where each $F_n$ is algebraic
over $\ff_p(x)$. Therefore each $a_n F_n$ is algebraic over $\ff_p(a_1,\ldots,
a_k)(x)$ and so is $F$.
\end{remark}

	 
\subsection{Automatic sequences and algebraicity}\label{ss:aut}
Here we recall briefly the definition and properties of automatic sequences.
We refer the readers to \cite{Allouche2003Sh} for a comprehensive exposition 
on the subject.
A sequence $(u_n)_n$ is said to be $k$-automatic if it can be generated by a
finite $k$-automaton. In this article we will use an equivalent definition.
The $k$-kernel of a sequence $(u_n)_n$ is defined as the smallest set
containing $(u_n)_n$ and stable under the operations $(v_n)_n\mapsto (v_{kn+r})_n$ for $r=0,1,\ldots, k-1$.
The sequence $\bf{u}$ is said to be $k$-automatic if its $k$-kernel is finite.

The well-known Theorem of Christol, Kamae, Mend\`es France and Rauzy 
\cite{Christol1980KMFR}
states that a formal power series in $\ff_p[[x]]$ is algebraic over 
$\ff_p(x)$ if and only if the sequence of its coefficients is $p$-automatic.
This theorem is generalized by Salon \cite{Salon1986} to multivariate cases:
We define the $k$-kernel of a  multi-dimensional sequence 
$\mathbf{u}=(u_{n_1,\ldots, n_d})_{n_1,\ldots,n_d}$ 
to be the smallest set containing $\bf{u}$ and stable under the operations 
$$(v_{n_1, \ldots, n_d})_{n_1,\ldots, n_d}\mapsto
(v_{k\cdot n_1 +r_1, \ldots, k\cdot n_d+r_d})_{n_1,\ldots, n_d}$$
for $0\leq r_1,\ldots, r_d <k$. The sequence $\bf{u}$ is said to be 
$k$-automatic if its $k$-kernel is finite. When $\bf{u}$ takes values in
$\ff_p$, then the formal power series $\sum u_{n_1,\ldots, n_d} x_1^{n_1}\cdot x_d^{n_d}$
is algebraic over $\ff_p(x_1,\ldots,x_d)$ if and only if $\bf{u}$ is 
$k$-automatic.

For $r=0,1,\ldots, p-1$, the Cartier operators $\Lambda_r$ are defined as
\begin{align*}
	\Lambda_r:\ff_p[[x]]&\rightarrow \ff_p[[x]]\\
	\sum_{n=0}^\infty u_n x^n &\mapsto \sum_{n=0}^\infty u_{pn+r} x^n.
\end{align*}
Thus for $f\in \ff_p[[x]]$, we have
\begin{equation*}
	f= \sum_{r=0}^{p-1} x^r (\Lambda_r f)^p.
\end{equation*}
In the multivariate case, the Cartier operators are
\begin{align*}
	\Lambda_{r_1,\ldots, r_d}:\ff_p[[x_1,\ldots, x_d]]&\rightarrow \ff_p[[x_1,\ldots, x_d]]\\
	\sum u_{n_1,\ldots, n_d} x_1^{n_1}\cdots x_d^{n_d} 
	&\mapsto \sum u_{pn_1+r_1, \ldots, pn_d+r_d} x_1^{n_1}\cdots x_d^{n_d}.
\end{align*}
For $f\in \ff_p[[x_1,\ldots, x_d]]$, we have
\begin{equation*}
	f= \sum_{r_1,\ldots,r_d=0}^{p-1} x_1^{r_1}\cdots x_d^{r_d} (\Lambda_{r_1,\ldots, r_d} f)^p.
\end{equation*}

The following lemma will be used in the proof of theorem \ref{thm:main}:
\begin{lem}\label{lem:deg}
	Let $\mathbf{u}=(u_{n_1,\ldots, n_d})_{n_1,\ldots, n_d}$ be a $p$-automatic sequence taking value in $\ff_p$. Let
	$\bf{v}$ be an element from the $p$-kernel of $\bf{u}$, then the degree of 
	algebraicity of 
	$\sum v_{n_1,\ldots, n_d} x_1^{n_1}\cdot x_d^{n_d}$ is less than or equal to that of 
		$\sum u_{n_1,\ldots, n_d} x_1^{n_1}\cdot x_d^{n_d}$.
\end{lem}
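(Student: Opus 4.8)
The plan is to exploit the fact that the algebraic closure $\overline{\ff_p(x_1,\dots,x_d)}$ inside $\ff_p((x_1,\dots,x_d))$ is closed under the Cartier operators $\Lambda_{r_1,\dots,r_d}$, and that these operators do not increase the degree of algebraicity. Concretely, write $f=\sum u_{n_1,\dots,n_d}\,x_1^{n_1}\cdots x_d^{n_d}$ and let $K=\ff_p(x_1,\dots,x_d)$. Since $f$ is algebraic over $K$ of degree $m$, say, the $K$-vector space $V=K[f]$ is $m$-dimensional; I would show that each $\Lambda_{r_1,\dots,r_d}$ maps $V$ into itself, or more precisely into a $K$-vector space of dimension at most $m$. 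The starting point is the identity
\begin{equation*}
	g=\sum_{r_1,\dots,r_d=0}^{p-1} x_1^{r_1}\cdots x_d^{r_d}\,(\Lambda_{r_1,\dots,r_d}g)^p,
\end{equation*}
valid for every $g\in\ff_p[[x_1,\dots,x_d]]$, together with $\ff_p$-linearity and $p$-semilinearity of the $\Lambda$'s (that is, $\Lambda_{r_1,\dots,r_d}(a^p g)=a\,\Lambda_{r_1,\dots,r_d}(g)$ for $a\in K$), which follows from freshman's-dream exponentiation in characteristic $p$ and the definition of the operators.

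The key step is the following: I would take a $K$-basis $1=f^{(0)},f^{(1)},\dots,f^{(m-1)}$ of $K[f]$ and apply the decomposition above to each $f^{(i)}$, but with respect to the finer "Cartier expansion" — every element of $K$ is a $p$-th power times a polynomial, so after clearing denominators one can write each $f^{(i)}$ as a $K^p$-linear combination of the finitely many monomials $x_1^{a_1}\cdots x_d^{a_d}$ with $0\le a_j<p$, times elements of $K[f]$. Applying $\Lambda_{r_1,\dots,r_d}$ and using $p$-semilinearity, each $\Lambda_{r_1,\dots,r_d}f^{(i)}$ lands in the $K$-span of $\{f^{(0)},\dots,f^{(m-1)}\}$, because raising the basis elements to the $p$-th power and then extracting a Cartier component returns a $K$-linear combination of the same basis (this is the standard argument that the Cartier-closed module generated by an algebraic power series is finite-dimensional over $K$, of dimension bounded by the degree). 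Hence every iterated Cartier image of $f$ is algebraic of degree at most $m$ over $K$.

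Finally, I would observe that every element $\bf{v}$ of the $p$-kernel of $\bf{u}$ is obtained from $\bf{u}$ by a finite composition of the kernel operations $(w_{n_1,\dots,n_d})\mapsto(w_{pn_1+r_1,\dots,pn_d+r_d})$, and that the generating power series of $(w_{pn_1+r_1,\dots,pn_d+r_d})$ is exactly $\Lambda_{r_1,\dots,r_d}$ applied to the generating power series of $(w_{n_1,\dots,n_d})$. Therefore the generating series of $\bf{v}$ is an iterated Cartier image of $f$, hence algebraic over $K$ of degree at most that of $f$, by the previous paragraph. I expect the main obstacle to be bookkeeping in the multivariate $p$-semilinearity computation — making precise how clearing a common denominator (a $p$-th power in $K$) interacts with the Cartier operators so that the bound on the $K$-dimension is genuinely preserved and not merely multiplied by the number of operators applied; everything else is a routine transcription of the classical one-variable Christol-type argument.
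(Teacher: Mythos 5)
Your overall strategy---show that the Cartier operators map (power series lying in) $K[f]$ into a $K$-space of dimension at most $m=\deg f$, then note that kernel elements are iterated Cartier images---is a legitimate route, and it genuinely differs from the paper's: the paper instead imports from the proof of Christol's theorem an Ore-type relation $f=a_1f^{p}+a_2f^{p^2}+\cdots+a_nf^{p^n}$ with $a_i\in K=\ff_p(x_1,\ldots,x_d)$, applies $\Lambda_{r_1,\ldots,r_d}$ using $\Lambda(g\cdot h^p)=\Lambda(g)\cdot h$ (extended to rational coefficients via $\Lambda(g/h)=\Lambda(gh^{p-1})/h$), and gets $\Lambda f=\Lambda(a_1)f+\Lambda(a_2)f^p+\cdots+\Lambda(a_n)f^{p^{n-1}}\in K[f]$ in one line, after which iteration over the kernel is immediate.

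However, as written your key step is asserted rather than proved, and the justification offered does not reach it. To use $p$-semilinearity you must write each basis element $f^{(i)}$ in the form $\sum_{\mu}\mu\,h_\mu^{p}$ where $\mu$ runs over the monomials $x_1^{a_1}\cdots x_d^{a_d}$ with $0\le a_j<p$ and, crucially, $h_\mu\in K[f]$; only then does $\Lambda_{r_1,\ldots,r_d}f^{(i)}=h_{(r_1,\ldots,r_d)}\in K[f]$ follow. Your phrase ``every element of $K$ is a $p$-th power times a polynomial, so after clearing denominators one can write each $f^{(i)}$ as a $K^p$-linear combination of the monomials, times elements of $K[f]$'' handles only the coefficients in $K$, not the powers of $f$ themselves (note $f^{(i)}=1\cdot\mu_0\cdot f^{(i)}$ is such an expression and gives nothing); the statement you actually need is the field identity $K(f)=\bigoplus_\mu \mu\,K(f)^p$, i.e.\ $K(f)=K\cdot K(f)^p$. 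That is true, but it requires a real argument: $[L:L^p]=[K:K^p]=p^d$ for the finite extension $L=K(f)$, together with the fact that the $p^d$ monomials remain linearly independent over $L^p$ because they are independent over $M^p$ in the ambient series field $M$; these two facts force $L=\bigoplus_\mu\mu L^p$. Appealing instead to ``the standard argument that the Cartier-closed module generated by an algebraic power series is finite-dimensional over $K$, of dimension bounded by the degree'' is near-circular here: the standard finiteness construction in Christol's theorem produces a stable space whose dimension is in general larger than $\deg f$, while the sharp bound $\deg\le\deg f$ is exactly what the lemma claims. So either supply the $p$-basis argument above, or do what the paper does and quote the relation $f=\sum_{i\ge1}a_if^{p^i}$; your final paragraph (kernel elements as iterated Cartier images) is fine as stated.
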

\begin{proof}
		Let $f=\sum u_{n_1,\ldots, n_d} x_1^{n_1}\cdot x_d^{n_d}$.
	We only need to prove that for all $(r_1,\ldots, r_d) \in \{0,1,\ldots, p-1\}^d$,
	for $\Lambda:=\Lambda_{r_1,\ldots, r_d}$, the degree of algebraicity of 
	$\Lambda f$ is less than or equal to 
	that of $f$.
	From the proof of Christol's theorem we know that there exists $n\geq 0$
	and $a_1, a_2, \ldots a_n\in \ff_p(x_1,\ldots, x_d)$ such that
	$$f=a_1 f^{p^1}+a_2 f^{p^2}+\cdots + a_n f^{p^n}.$$
	We make use of the following property of $\Lambda$:
	for two power series $g$ and $h$, we have 
	$$\Lambda (g\cdot h^p)=\Lambda (g) \cdot h,$$
	and 
	$$\Lambda \frac{g}{h}= \Lambda\frac{g\cdot h^{p-1}}{h^p}=\frac{\Lambda{(g\cdot
	h^{p-1}})}{h}.$$
	Therefore
	$$\Lambda f = \Lambda(a_1) f+\Lambda (a_2) f^p + \cdots +
	\Lambda(a_n) f^{p^{n-1}}\in \ff_p(x_1,\ldots, x_d)[f].$$
	We conclude that the degree of $\Lambda f$ is less than or equal to
	that of $f$.
\end{proof}

\section{Proof of Theorem \ref{thm:main}}\label{sec:proof}
	 To prove of the first part of theorem \ref{thm:main}, we consider 
	 the truncation 	 $[s_0,...,s_{2^n-2}]$ of the continued fraction 
	 $\CF(\se)$ for $n\geq 1$. This corresponds to the palindrome $W_n$.
	 For $n\geq1$, we set $u_n/v_n=[s_0,...,s_{2^n-2}]$. We have 
   $(u_1,v_1)=(s_0,1)$ and $(u_2,v_2)=(s_0s_1s_2+s_0+s_2,s_1s_2+1)$, etc. 
	 The first step of the proof was introduced in the author's previous work
	 with Lasjaunias concerning 
	 the particular case of the period-doubling sequence. In Lemma 3.2 from 
	 \cite{Hu2022L}, using
	 basic properties of continuants, we proved that the pair $(u_n,v_n)$ 
	 satisfies a simple recurrence relation. Indeed, for $n\geq1$, we have
	 \begin{equation}\label{eq:R}
		 u_{n+1}=\varepsilon_n u_n^2  \mbox{ and } v_{n+1}=\varepsilon_n u_n v_n+1  \tag{R}.
	 \end{equation}
	From \eqref{eq:R} we get immediately 
	$$\frac{v_{n+1}}{u_{n+1}}=\frac{v_n}{u_n} + \frac{1}{u_{n}}$$ and therefore we obtain
	 $$\frac{v_n}{u_n}=\sum_{1\leq k\leq n} \frac{1}{u_k}$$ 
	 for $n\geq1$. Consequently
	 \begin{equation}
		 \frac{1}{\CF({\mathbf{s(\boldsymbol\varepsilon)}})}=\lim _{n\rightarrow \infty} 
		 \frac{v_n}{u_n}= \sum_{n\geq 1} \frac{1}{u_n}.\tag{$\star$}
	 \end{equation}
It is easier to work with the following series which differs by a rational
fractional from $1/\CF(\se)$:
\begin{equation*}
	G:=\sum_{n>l} \frac{1}{u_n}.
\end{equation*}
	By regrouping every $d$ terms and using \eqref{eq:R}, we decompose $G$ as follows 
	\begin{equation} \label{eq:G}
		G=\sum_{n=0}^{d-1}\frac{ G_n^2}{e_n},
\end{equation}
where 
\begin{equation*}
	G_n=\sum_{k=0}^\infty \frac{1}{u_{l+n+kd}}
\end{equation*}
for $n=0,1,\ldots, d-1$.
It follows from their definition and \eqref{eq:R} that
$\{G_n\}$ satisfy the following relations
\begin{equation}
	G_0=\frac{1}{u_l}+\frac{G_{d-1}^2}{e_{d-1}}
\end{equation}
and 
	\begin{equation}\label{eq:gj}
		G_n= \frac{G_{n-1}^2}{e_{n-1}}
\end{equation}
for $n=1,2,\ldots, d-1$, with the convention $u_0=1$,
from which we deduce
	\begin{equation}\label{eq:g0}
	G_0=\frac{1}{u_l}+\frac{G_{0}^{2^d}}{e_{d-1}e_{d-2}^2\cdots e_0^{2^{d-1}}}.
\end{equation}

Using \eqref{eq:G}, \eqref{eq:gj} and \eqref{eq:g0}, we can express $G$ in terms of $G_0$
	\begin{equation*}
		G=\frac{1}{u_l}+G_0+\frac{G_0^2}{e_0}+\frac{G_0^4}{e_0^2 e_1}+\cdots+ 
		\frac{G_0^{2^{d-1}}}{e_0^{2^{d-2}}e_1^{2^{d-3}}\cdots e_{d-2}}.
	\end{equation*}
This means that the degree algebraicity of $G$ (and that of $\CF(\se)$) is at 
most that of $G_0$. This proves that the degree of algebraicity of 
$\CF(\se)$ is at most $2^d$.

When $e_0$ is different from all other letters,  $G_0$ is in the $2$-kernel of 
$G$, so that by lemma \ref{lem:deg}, the degree of $G$ is at least that
of $G_0$. 
To show that degree of algebraicity of $G_0$ is at least $2^d$, consider the
series
$$g=\sum_{k=0}^\infty \frac{1}{e_0^{2^{k\cdot d}}}$$
that satisfies 
$$e_0\cdot g^{2^d}+e_0\cdot g+1=0.$$
This is an irreducible polynomial by Eisenstein's criterion. 
If we project all the letters in $G_0$ to $e_0$, then we get the series
$e_0\cdot \sum_{k=l}^\infty \frac{1}{e_0^{2^{k\cdot d}}}$, which has the 
same degree of algebraicity as $g$. Since projection cannot increase degree, 
we conclude that in this case, the degree of algebraicity of $G_0$, and 
therefore that of $\CF(\se)$ is $2^d$.



\medskip
  To prove the second part of the theorem, we need the following lemma
\begin{lem}\label{lem:pos}
	Suppose that the letters $e_j\; (0\leq j<d)$ are all distinct and that they are 
	different from the letters $p_j\; (0\leq j<l)$. For $j=0,1,\ldots, d-1$, define
	$$P_j:=\{ k\mid s_k=e_j\}.$$
	Then 
	\begin{equation}\label{eq:lem1 1}
	P_{j+1}=\{ 2k+1 \mid k\in P_j\}\quad \mbox{ for } j=0,1,\ldots, d-2
	\end{equation}
	and
	\begin{equation}\label{eq:lem1 2}
P_0=\{2k+1\mid k\in P_{d-1}\}\cup \{(2k+1)\cdot 2^l-1\mid k =1,2,\ldots\}.
	\end{equation}
	\end{lem}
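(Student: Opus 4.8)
The plan is to reduce the lemma to elementary bookkeeping about the $2$-adic valuations of the indices. Write $\nu_2(m)$ for the exponent of $2$ in $m\ge 1$. The first step, and the only one with real content, is the closed form
$$s_k=\varepsilon_{\nu_2(k+1)}\qquad\text{for all }k\ge 0,$$
i.e.\ $\mathbf{s}(\boldsymbol\varepsilon)$ is the composition of $\boldsymbol\varepsilon$ with the ``ruler'' sequence $k\mapsto\nu_2(k+1)$. I would prove this by induction on $n$ from $W_{n+1}=W_n\varepsilon_n W_n$: for $0\le k\le 2^n-2$ the entry $s_k$ lies in the first copy of $W_n$ and the claim holds by induction; the central entry, at $k=2^n-1$, is $\varepsilon_n=\varepsilon_{\nu_2(2^n)}$; and an index in the second copy is $k=2^n+k'$ with $0\le k'\le 2^n-2$, where $\nu_2(k+1)=\nu_2(2^n+(k'+1))=\nu_2(k'+1)$ since $\nu_2(k'+1)\le n-1<n$, so $s_k=s_{k'}=\varepsilon_{\nu_2(k'+1)}=\varepsilon_{\nu_2(k+1)}$.

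Next I would use this to describe each $P_j$ directly. By the closed form, $k\in P_j$ iff $\varepsilon_{\nu_2(k+1)}=e_j$; if $\nu_2(k+1)<l$ the corresponding letter of $\boldsymbol\varepsilon$ is some $p_i$, which differs from every $e_j$ by hypothesis, while if $\nu_2(k+1)\ge l$ the letter is $e_{(\nu_2(k+1)-l)\bmod d}$, which, the $e_i$ being pairwise distinct, equals $e_j$ exactly when $\nu_2(k+1)\equiv l+j\pmod d$. Hence
$$P_j=\{\,k\ge 0:\ \nu_2(k+1)\ge l\ \text{ and }\ \nu_2(k+1)\equiv l+j\pmod d\,\}.$$

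The two identities then follow by valuation arithmetic. For \eqref{eq:lem1 1}, fix $0\le j\le d-2$. If $k\in P_j$ then $\nu_2(2(k+1))=\nu_2(k+1)+1\ge l+1$ and $\equiv l+(j+1)\pmod d$, and since $0\le j+1\le d-1$ this is precisely the condition $2k+1\in P_{j+1}$. Conversely, if $k'\in P_{j+1}$ then $\nu_2(k'+1)\equiv l+(j+1)\pmod d$ with $1\le j+1\le d-1$, so $\nu_2(k'+1)\ne l$, hence $\nu_2(k'+1)\ge l+1\ge 1$, $k'$ is odd, and $k:=(k'-1)/2$ has $\nu_2(k+1)=\nu_2(k'+1)-1\ge l$ with the correct residue, so $k\in P_j$. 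For \eqref{eq:lem1 2}, I would split $P_0$ by the value of $\nu_2(k+1)$, which ranges over $\{l,\,l+d,\,l+2d,\dots\}$: the indices with $\nu_2(k+1)=l$ are exactly those with $k+1=2^l\cdot(\text{odd})$, giving the explicit set in \eqref{eq:lem1 2}; and, by the same doubling argument as above, the indices with $\nu_2(k+1)\ge l+d$ are $\{2k+1:k\in P_{d-1}\}$, since $P_{d-1}$ is the set of $k$ with $\nu_2(k+1)$ in the residue class $l-1\pmod d$ and $\ge l$, i.e.\ with $\nu_2(k+1)\in\{l+d-1,\,l+2d-1,\dots\}$, and $k\mapsto 2k+1$ raises these valuations by one. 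Uniting the two disjoint pieces yields \eqref{eq:lem1 2}.

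The whole conceptual load is in the closed form of the first step; everything after that uses only additivity of $\nu_2$ against powers of $2$ and arithmetic modulo $d$. The point that needs attention is the asymmetry between \eqref{eq:lem1 1} and \eqref{eq:lem1 2}: the residue class $0\pmod d$ already meets the range $\{l,l+d,l+2d,\dots\}$ at its smallest element $l$, so $P_0$ acquires an extra piece, corresponding to the first occurrence of the letter $e_0$ at index $2^l-1$, that has no counterpart among the other $P_j$; getting the starting index of this extra arithmetic progression exactly right is the only subtle point.
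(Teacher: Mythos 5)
Your proof is correct, and it takes a genuinely different route from the paper. The paper never writes a formula for $s_k$: it proves \eqref{eq:lem1 1} and \eqref{eq:lem1 2} by writing $\se$ in two ways, as $W_{l+j}$-blocks separated by single letters and as $W_{l+j+1}$-blocks separated by single letters, and matching the occurrences of $e_j$ in one decomposition with those of $e_{j+1}$ in the other; the hypothesis that the $e_i$ are pairwise distinct and distinct from the $p_i$ is used to ensure $e_j$ does not occur inside $W_{l+j}$, except in the wrap-around step $j=d-1$, where the occurrences of $e_0$ inside $W_{l+d}$ produce the extra set in \eqref{eq:lem1 2}. You instead establish the closed form $s_k=\varepsilon_{\nu_2(k+1)}$ by induction on the words $W_n$, characterize $P_j=\{k:\nu_2(k+1)\ge l,\ \nu_2(k+1)\equiv l+j\ (\mathrm{mod}\ d)\}$, and reduce both identities to valuation arithmetic. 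Your version is more explicit: both inclusions in \eqref{eq:lem1 1} (in particular the converse one, which the paper treats informally via ``appears at the same places'') become transparent, and it is clear exactly where the distinctness hypotheses enter. The paper's argument is shorter and closer in spirit to the recursive word construction; yours buys a reusable description of all the $P_j$ at the cost of one extra induction.

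One point needs attention. Your computation of the indices with $\nu_2(k+1)=l$ gives $\{(2k+1)\cdot 2^l-1 \mid k=0,1,2,\ldots\}$, which contains the index $2^l-1$ (the centre of $W_{l+1}$, the first occurrence of $e_0$). That index lies in neither $\{2k+1\mid k\in P_{d-1}\}$ (for $l\ge 1$ one has $s_{2^{l-1}-1}=p_{l-1}\ne e_{d-1}$, and for $l=0$ the index $0$ is even) nor the displayed set with $k\ge 1$. So your assertion that this ``gives the explicit set in \eqref{eq:lem1 2}'' does not literally match the statement, whose second set starts at $k=1$: what your argument proves is the version with $k=0,1,2,\ldots$, and a check on the period-doubling case ($l=0$, $d=2$, where $0\in P_0$ but $0$ is not in the right-hand side as printed) confirms that this is the correct version, i.e.\ the printed range $k\ge 1$ (which also appears in the paper's proof) omits the single element $2^l-1$. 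Your closing remark identifies the starting index as the delicate point, but you should say explicitly that your set begins at $k=0$ and therefore differs from the displayed formula by this one element, rather than claiming agreement. This discrepancy is harmless for the rest of the paper, since only \eqref{eq:lem1 1} and the partition of $\mathbb{N}$ by the supports are used afterwards, but in a proof of the lemma itself it must be stated.
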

	\begin{proof}
		To simplify notation, we write $e_n$ instead of $e_{n\pmod{d}}$ when $n\geq d$.
		By definition, $\se$ can be written in two ways:
		\begin{equation}\label{eq:lem1}
			W_l e_0 W_l e_1 W_l e_0 W_l e_2 W_l e_0 W_l e_1 W_l e_0 W_l\ldots
		\end{equation}
		and
		\begin{equation}\label{eq:lem2}
			W_{l+1} e_1 W_{l+1} e_2 W_{l+1} e_1 W_{l+1} e_3 W_{l+1} e_1 W_{l+1} e_2 W_{l+1} e_1 W_{l+1}\ldots
		\end{equation}
		Letter $e_0$ appears
		at the same places in \eqref{eq:lem1} as $e_1$ in \eqref{eq:lem2}, except
		that the length of the block $W_l$ is $2^l-1$, while that of $W_{l+1}$ is
		$2^{l+1}-1$. 
		By the assumption on $(e_j)$ and $(p_j)$, the letter $e_j$ does not 
		appear in $W_{l+j}$ $(0\leq j<d)$.
		The above observation allows us to conclude that 
  	$$P_{1}=\{ 2k+1 \mid k\in P_0\}.$$
		Similar argument shows that
	  $$P_{j+1}=\{ 2k+1 \mid k\in P_j\}\quad \mbox{ for } j=1,\ldots, d-2.$$

		To prove \eqref{eq:lem1 2},
		we also write $\se$ in two ways:
		\begin{equation}\label{eq:lem3}
			W_{l+d-1} e_{d-1} W_{l+d-1} e_{0} W_{l+d-1} e_{d-1} W_{l+d-1} e_{1} W_{l+d-1} e_{d-1}  W_{l+d-1} e_{0}\ldots 
		\end{equation}
		and
		\begin{equation}\label{eq:lem4}
			W_{l+d} e_0 W_{l+d} e_{1} W_{l+d} e_0 W_{l+d} e_{2} W_{l+d} e_0 W_{l+d} e_{1} W_{l+d} e_0 W_{l+d}\ldots
		\end{equation}
		As with previous cases, the letter $e_0$ appears
		at the same places in \eqref{eq:lem4} as $e_{d-1}$ in \eqref{eq:lem3}, 
		except that the length of the block $W_{l+d}$ is $2^{l+d}-1$, 
		while that of $W_{l+d-1}$ is $2^{l+d-1}-1$. Unlike previous cases, the letter $e_0$ also occurs in $W_{l+d}$ at indices $(2k+1)\cdot 2^l-1$ for $k=1,2,\ldots$ That's how we get the additional term in \eqref{eq:lem1 2}.
	\end{proof}

  We consider another sequence
	$\boldsymbol\varepsilon'=p_0'\ldots p_{l-1}'(e_0'\ldots e_{d-1}')^\infty$.
	where the letters $p_j'$ $(0\leq j<l)$ and $e_j'$ $(0\leq j<d)$
	are all distinct, so that we can 
	apply lemma \ref{lem:pos} to obtain the sets $P_j$ for $j=0,1,\ldots, d-1$
	corresponding to the sequence $\mathbf{s}(\boldsymbol\varepsilon')$.

	We decompose the series $F$ as follows
	\begin{equation}\label{eq:sf0}
		F=R+ e_0 F_0+\cdots +e_{d-1} F_{d-1},
	\end{equation}
	where 
	$$R=\frac{\sum_{k=0}^{2^l-2}s_k z^k}{1+z^{2^l}}
	$$
	is a rational function which accounts for the preperiodic part of $\boldsymbol\varepsilon$
	and 
	$$F_n=\sum_{j\in P_n} z^j$$
	for $n=0,1,\ldots, d-1$. 

	From lemma \ref{lem:pos} we deduce the relations
	\begin{equation}\label{eq:sf1}
		F_n=zF_{n-1}^2
	\end{equation}
	for $n=1,\ldots,d-1$.
	On the other hand, since the supports of $R$ and $F_0, \ldots, F_{d-1}$ form a
	partition of $\mathbb{N}$, we have
	\begin{equation}\label{eq:sf2}
		F_0 =\frac{1}{1+z}-\frac{\sum_{k=0}^{2^l-2} z^k}{1+z^{2^l}}-
		\sum_{n=1}^{d-1} F_n
		=\frac{z^{2^l-1}}{1+z^{2^l}}- \sum_{n=1}^{d-1} F_n.
	\end{equation}
	From \eqref{eq:sf1} and \eqref{eq:sf2} we deduce
	\begin{equation}
		F_0 =\frac{z^{2^l-1}}{1+z^{2^l}}-\sum_{n=1}^{d-1} z^{2^{n}-1} F_0^{2^{n}}.
	\end{equation}
	Using \eqref{eq:sf0} and \eqref{eq:sf1} we can express $F$ using $F_1$ 
	\begin{equation}\label{eq:F}
	F=R+ \sum_{n=0}^{d-1} e_n z^{2^{n}-1} F_0^{2^n}.
	\end{equation}
	This shows that the degree of $F$ is at most that of $F_0$.

	The degree of $F_0$ is $2^{d-1}$: if $l$ were $0$, in place of $F_0$, by 
	\ref{eq:sf0}, we would have a series $f$ that satisfies
	$$f=\frac{1}{1+z}- \sum_{n=1}^{d-1} z^{2^{n}-1} f^{2^{n}}.$$
	Therefore the series $h:=zf$ satisfy
	$$(1+z)\cdot \sum_{n=0}^{d-1} h^{2^{n}}+z=0,$$
	which is an irreducible polynomial in $h$ by  Eisenstein's criterion.

	The support of $f$ is the positions of $e_0$ in 
	$$\mathbf{s}((e_0e_1\ldots e_{d-1})^\infty)=W_0 e_0 W_0 e_1 W_0 e_0 W_0 e_2 W_0 e_0 W_0 e_1 W_0 e_0 W_0\ldots$$
	Comparing the above expression with \eqref{eq:lem1} we obtain
	$$F_0=z^{2^l-1}f^{2^l}.$$
	Therefore $f$ is in the $2$-kernel of $F_0$. By lemma \ref{lem:deg}
	the degree of $F_0$ is at least that of $f$.

	When $e_0$ is different from the other letters, we can obtain $F_0$ from $F$ 
	mapping $e_0$ to $1$ and the other letters to $0$. This show that in this case
	the degree of $F$ is at least that of $F_0$.


\section{Examples}\label{sec:eg}
\begin{eg}
	Let $\boldsymbol\varepsilon=(ab)^\infty$. Then $\bf{s(\boldsymbol\varepsilon)}$ is the  period-doubling 
	sequence.
	We have 
	\begin{align*}
		\frac{1}{\CF(\mathbf{s})}=G&=\frac{1}{a}+\frac{1}{a^2b}+\frac{1}{a^4b^2a}+\frac{1}{a^8b^4a^2b}+\frac{1}{a^{16}b^8a^4b^2a}\cdots\\
		&=\frac{G_0^2}{a}+\frac{G_1^2}{b},
	\end{align*}
	with
	\begin{align*}
		G_0&=1+\frac{1}{a^2b}+\frac{1}{a^8b^4a^2b}+\cdots\\
		G_1&=\frac{1}{a}+\frac{1}{a^4b^2a}+\frac{1}{a^{16}b^8a^4b^2a}\cdots.
	\end{align*}
	The series $G_0$ and $\CF(\mathbf{s})$ satisfy
	\begin{equation*}
	G_0=1+\frac{G_0^4}{a^2b},
	\end{equation*}
	and
	\begin{equation*}
		\frac{1}{\CF(\mathbf{s})}=G=1+G_0+\frac{G_0^2}{a}.
	\end{equation*}
	We get as expected (see \cite{Hu2022L})
	$$(ab + b^2 + 1)+ (a^2b + ab^2)\cdot G+ ab\cdot  G^2+ G^4=0$$

	\medskip
	The formal power series defined by $\bf{s(\boldsymbol\varepsilon)}$ is
	$$F=a+bz+az^2+az^3+az^4+bz^5+az^6+\cdots=a F_0 +b F_1. $$
	The series $F_0$ and $F$ satisfy
	$$F_0=\frac{1}{1+z}-zF_0^2,$$
	$$F=aF_0 + b z F_0^2.$$
	We deduce
	$$(z^3 + z)\cdot F^2+  (az^2 + bz^2 + a + b)\cdot F+ 
	(a^2z + abz + b^2z + a^2 + ab)=0.$$
\end{eg}
\begin{remark}
	Here we have a simple example of an algebraic formal power series whose 
	continued fraction has unbounded partial quotients. If we let $a=b=x$
	in $G_0$, we get a series
	$$g=1+\frac{1}{x}+\frac{1}{x^5}+\frac{1}{x^{21}}+\cdots,$$
	which satisfies the algebraic equation
	$$\frac{g^4}{x}+g+1=0.$$
	It can be shown that
	$$g=[1,x,x^3,x,x^{11},x,x^3,x,x^{43},\ldots]=[1,x^{c_0},x^{c_1},x^{c_2},
	\ldots],$$ 
	where $c_{2n}=1$ and $c_{2n+1}=4c_n-1$ for all $n\geq 0$.
	The exponents of partial quotients are not bounded, but they form nontheless 
	a $2$-regular sequence (see \cite{Allouche1992Sh}).
\end{remark}
\begin{eg}
	Let $\boldsymbol\varepsilon=a(bc)^\infty$. 
	Then $\mathbf{s(\boldsymbol\varepsilon)}=abacabababacabacabac\cdots $ 
	We have 
	$$	\frac{1}{\CF(\mathbf{s})}=\frac{1}{a}+G,$$
	where
	\begin{align*}
		G&=\frac{1}{a^2b}+\frac{1}{a^4b^2c}+ \frac{1}{a^8b^4c^2b}+
		\frac{1}{a^{16}b^8c^4b^2c}+
		\frac{1}{a^{32}b^{16}c^8b^4c^2b}+\cdots\\
		&=	\frac{G_0^2}{b}+\frac{G_1^2}{c},
	\end{align*}
	with
	\begin{align*}
		G_0&=\frac{1}{a}+\frac{1}{a^4b^2c}+\frac{1}{a^{16}b^8c^4b^2c}+\cdots\\
		G_1&=\frac{1}{a^2b}+\frac{1}{a^8b^4c^2b}+\frac{1}{a^{32}b^{16}c^8b^4c^2b}+\cdots
	\end{align*}
	The series $G_0$, and $\CF(\mathbf{s(\boldsymbol\varepsilon)})$ satisfy
	\begin{align}
		G_0&=\frac{1}{a}+\frac{G_0^4}{b^2c},\label{eg:abcg0}\\
		\frac{1}{\CF(\mathbf{s(\boldsymbol\varepsilon)})}&=G_0+\frac{G_0^2}{b}.\label{eg:abccf}
	\end{align}
	We deduce from  \eqref{eg:abcg0} and \eqref{eg:abccf} that 
	the minimal polynomial of $\CF(\mathbf{s})$ is
	\begin{equation}
	(ab^2c + abc^2 + c^2)x^4  + (a^2b^2c + a^2bc^2)x^3+ a^2bcx^2 + a^2.
	\end{equation}

	\medskip
	The formal power series defined by $\bf{s(\boldsymbol\varepsilon)}$ is
	\begin{align*}
		F&=a + bz + az^2 + cz^3 + az^4 + bz^5 + az^6 + bz^7 + az^8 + bz^9 + az^{10} +\cdots\\
		&=\frac{a}{1+z^2}+bF_0+cF_1.
	\end{align*}
	The series $F_0$ and $F$ satisfy
	$$F_0=\frac{z}{1+z^2}-zF_0^2,$$
	$$F= \frac{a}{1+z^2} +  bF_0 + c z F_0^2.$$
	From these we deduce that the minimal polynomial for $F$ is
	$$A x^2+Bx+C=0,$$
	where
	\begin{align*}
		A&=z^5 + z,\\
		B&=bz^4 + cz^4 + b + c,\\
		C&=b^2z^3 + bcz^3 + c^2z^3 + abz^2 + acz^2 + a^2z + b^2z + bcz + ab + ac.
	\end{align*}
\end{eg}
\begin{eg}
	Let $\boldsymbol\varepsilon=(aabb)^\infty$. Then $\mathbf{s(\boldsymbol\varepsilon)}=aaabaaabaaabaaaaaaab\ldots$.
	We have
	\begin{equation*}
		\frac{1}{	\CF(\mathbf{s(\boldsymbol\varepsilon)})}=G=
		\frac{1}{a}+\frac{1}{a^2a}+\frac{1}{a^4a^2b}+\frac{1}{a^8a^4b^2b}+
		\frac{1}{a^{16}a^8b^4b^2a}+ \frac{1}{a^{32}a^{16}b^8b^4a^2a}+\cdots
	\end{equation*}
	and
	$$G=\frac{G_0^2}{a}+\frac{G_1^2}{a}+ \frac{G_2^2}{b}+\frac{G_3^2}{b}.$$
	with 
	\begin{align*}
		G_0&=1+\frac{1}{a^8a^4b^2b}+\cdots\\
		G_1&=\frac{1}{a}+\frac{1}{a^{16}a^8b^4b^2a}+\cdots\\
		G_2&=\frac{1}{a^2a}+\frac{1}{a^{32}a^{16}b^8b^4a^2a}\cdots\\
		G_3&=\frac{1}{a^4a^2b}+\frac{1}{a^{64}a^{32}b^{16}b^8a^4a^2b}\cdots\\
	\end{align*}
	The series $G$ and $G_0$ satisfy
	\begin{align*}
		G_0&=1+\frac{G_0^{16}}{a^{12}b^3}\\
		G&=1+G_0+\frac{G_0^2}{a}+\frac{G_0^4}{a^3}+\frac{G_0^8}{a^6b}.
	\end{align*}
	We deduce
	$$G^{16}+c_8G^8+c_2G^2+c_1G+c_0=0 \label{eg2:G}.$$
	where
	\begin{align*}
		c_{0}&=a^{11}b^3 + a^{10}b^4 + a^3b^{11} + a^2b^{12} + a^6b^6 + a^4b^8 + a^2b^{10} + b^{12} + a^6b^2 +\\&\quad a^4b^4 + a^2b^6 + b^8 + 1,\\
		c_1&=a^{12}b^3 + a^{11}b^4 + a^4b^{11} + a^3b^{12},\\
		c_2&=a^{11}b^3 + a^{10}b^4 + a^8b^6 + a^6b^8 + a^4b^{10} + a^3b^{11},\\
		c_8&=a^6b^2 + a^4b^4 + a^2b^6,
	\end{align*}
	This polynomial is irreducible.

	\medskip
	The formal power series defined by $\mathbf{s(\boldsymbol\varepsilon)}$ is
	\begin{align*}
		F&=a(F_0+F_1)+a(F_2+F_3),
	\end{align*}
	where
	\begin{align*}
		F_0&=1 + z^{2} + z^{4} + z^{6} + z^{8} + z^{10} + z^{12} + z^{14} + z^{15} + z^{16} + z^{18}+\cdots\\
		F_1&=z + z^{5} + z^{9} + z^{13} + z^{17} + z^{21} + z^{25} + z^{29} + z^{31} + z^{33} + z^{37}+\cdots\\
		F_2&=z^{3} + z^{11} + z^{19} + z^{27} + z^{35} + z^{43} + z^{51} + z^{59} + z^{63} + z^{67}+\cdots\\
		F_3&=z^{7} + z^{23} + z^{39} + z^{55} + z^{71} + z^{87} + z^{103} + z^{119} + z^{127} + z^{135}  +  \cdots
	\end{align*}
	The series $F_0$ and $F$ satisfy
	\begin{align}
		F_0&=\frac{1}{1+z}+ z F_0^2+z^3F_0^4+z^7 F_0^8,\label{eg2:f0}\\
		F&=aF_0+azF_0^2+bz^3F_0^4+bz^7F_0^8.\label{eg2:f}
	\end{align}
	Here $F$ is not of degree $8$, but $4$, because we can write \eqref{eg2:f0}
	and \eqref{eg2:f} as
	\begin{equation}\label{eg3:1}
		\frac{1}{1+z}+ (F_0+z F_0^2)+z^3(F_0+zF_0^2)^4=0
	\end{equation}
	\begin{equation}\label{eg3:2}
		F=a(F_0+zF_0^2)+bz^3(F_0+zF_0^2)^4.
	\end{equation}
	From \eqref{eg3:1} and \eqref{eg3:2} we find minimal polynomial of
	$F$ to be
	$$c_4x^4+c_1x+c_0=0,$$
	where
	\begin{align*}
		c_0&=a^{4} z^{3} + a^{3} b z^{3} + a^{2} b^{2} z^{3} + a b^{3} z^{3} + b^{4} z^{3} + a^{4} z^{2} + a^{3} b z^{2} + a^{2} b^{2} z^{2} +\\ &\quad  a b^{3} z^{2} + a^{4} z + a^{3} b z + a^{2} b^{2} z + a b^{3} z + a^{4} + a^{3} b + a^{2} b^{2} + a b^{3}\\
		c_1&=a^{3} z^{4} + a^{2} b z^{4} + a b^{2} z^{4} + b^{3} z^{4} + a^{3} + a^{2} b + a b^{2} + b^{3}\\
		c_4&=z^{7} + z^{3}
	\end{align*}
	This example shows that the degree of the continued fraction is not 
	always twice that of the formal power series.
\end{eg}

\section{Continued Fractions and Riccati Differential Equations}\label{sec:ric}
In 1977, Baum and Sweet \cite{Baum1977S}
considered considered the subset $D$ of irrational continued fractions 
$\alpha$ in $\mathbb{F}_2((1/t))$  such that
    $$\alpha=[0,a_1,a_2,\ldots,a_n,\ldots]$$ 
where $\deg(a_i)=1$ (i.e., $a_i=t$ or $t+1$) for $i\geq 1$. 
Consider the subset $P$ of $\mathbb{F}_2((1/t))$
$$P:=\{\alpha \in \mathbb{F}_2((1/t)) \mid \alpha=\sum_{i<0} a_i t^i\}.$$
Hence $P$ contains $D$. They proved the following theorem:
\begin{thm}\label{thm:bs}[Baum-Sweet, 1977]
	An element $\alpha\in P$ is in $D$ if and only if $\alpha$ satisfies
	$$\alpha^2+t\alpha+1=(1+t)\beta^2$$
	for some $\beta\in P$.
\end{thm}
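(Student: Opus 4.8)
The plan is to prove both directions at once by exhibiting a \emph{self-similar} structure: the Baum--Sweet equation holds for $\alpha$ precisely when it holds for the continued-fraction tail $\{1/\alpha\}$, with a companion element transformed in a completely explicit way, and to close the resulting recursion with degree estimates in $\mathbb{F}_2((1/t))$. Throughout, for $\gamma\neq0$ let $\lfloor\gamma\rfloor\in\mathbb{F}_2[t]$ be the polynomial part and $\{\gamma\}:=\gamma-\lfloor\gamma\rfloor$, so that $\{\gamma\}\in P$ and $\{[a_0,a_1,a_2,\dots]\}=[0,a_1,a_2,\dots]$; write $\deg$ for the usual valuation on $\mathbb{F}_2((1/t))$, so $P=\{\gamma:\deg\gamma\le-1\}$.

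The two technical ingredients are as follows. \emph{(i) A degree lemma:} if $\alpha\in P$ and $\beta\in\mathbb{F}_2((1/t))$ satisfy $\alpha^2+t\alpha+1=(1+t)\beta^2$, then $\deg\alpha=-1$ and $\beta\in P$. This is forced by parity: $\deg((1+t)\beta^2)=1+2\deg\beta$ is odd while $\deg(\alpha^2+t\alpha+1)\le0$, so $\deg\beta\le-1$ and hence $(1+t)\beta^2$ has negative degree; then the degree-$0$ term of $\alpha^2+t\alpha+1$ must vanish, and since $\deg\alpha^2\le-2$ the only term that can cancel the constant $1$ is $t\alpha$, which requires $\deg\alpha=-1$. \emph{(ii) A propagation identity:} for $\alpha\in P$ with $\deg\alpha=-1$, write $1/\alpha=(t+\delta)+\alpha'$ with $\delta\in\mathbb{F}_2$ and $\alpha'=\{1/\alpha\}$; multiplying the equation by $1/\alpha^2=(t+\delta+\alpha')^2$ and simplifying in characteristic $2$ yields $(\alpha')^2+t\alpha'+(1+\delta+t\delta)=(1+t)(\beta/\alpha)^2$. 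When $\delta=0$ this is already $(\alpha')^2+t\alpha'+1=(1+t)(\beta/\alpha)^2$; when $\delta=1$ the constant on the left is $t$, and adding $1+t$ to both sides and using $x^2+1=(x+1)^2$ converts it to $(\alpha')^2+t\alpha'+1=(1+t)(\beta/\alpha+1)^2$. Uniformly: $\alpha'=\{1/\alpha\}$ satisfies the Baum--Sweet equation with companion $\beta'=\beta/\alpha+\delta$.

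Given these, the ``$\Leftarrow$'' direction is immediate. If $\alpha\in P$ satisfies the equation with $\beta\in P$, lemma (i) gives $\deg\alpha=-1$, so $\lfloor1/\alpha\rfloor\in\{t,t+1\}$ has degree $1$ and $\alpha':=\{1/\alpha\}\in P$; lemma (ii) produces a companion $\beta'$ for $\alpha'$, and lemma (i) applied to $\alpha'$ forces $\beta'\in P$ and $\deg\alpha'=-1$. Iterating, every tail is nonzero of degree $-1$ and every partial quotient has degree $1$; in particular the continued fraction is infinite, so $\alpha\in D$. For ``$\Rightarrow$'', let $\alpha=[0,t+\delta_1,t+\delta_2,\dots]\in D$ with tails $\alpha_n$ (all of degree $-1$). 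Guided by (ii) I would take the companion to be the series $\beta:=\sum_{n\ge1}\delta_n\,\alpha_0\alpha_1\cdots\alpha_{n-1}$, which converges (its $n$-th term has degree $-n$ or is $0$) and has $\deg\beta\le-1$, so $\beta\in P$; likewise $\beta_n:=\sum_{k\ge1}\delta_{n+k}\,\alpha_n\cdots\alpha_{n+k-1}\in P$, and one checks $\beta_n=\alpha_n(\delta_{n+1}+\beta_{n+1})$. Set $E_n:=\alpha_n^2+t\alpha_n+1+(1+t)\beta_n^2$; using $1/\alpha_n=(t+\delta_{n+1})+\alpha_{n+1}$ and $\beta_n/\alpha_n=\delta_{n+1}+\beta_{n+1}$, the computation of (ii) run in reverse gives $E_n=\alpha_n^2\,E_{n+1}$, hence $E_0=(\alpha_0\cdots\alpha_{n-1})^2E_n$ for every $n$; since $\deg E_n\le0$ while $\deg(\alpha_0\cdots\alpha_{n-1})^2=-2n\to-\infty$, we conclude $E_0=0$, which is the desired equation with $\beta\in P$.

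The hard part is getting the $\delta=1$ branch of (ii) right: if one fails to absorb the stray $\mathbb{F}_2$-bit by the ``complete the square'' move, the tail only satisfies $\gamma^2+t\gamma+t=(1+t)(\cdot)^2$ --- a genuinely different equation --- and the induction in the ``$\Leftarrow$'' step does not close; carrying this bit is exactly the role of the free parameter $\beta$. The second thing to watch is that every companion series must be shown to lie in $P$, not merely in $\mathbb{F}_2((1/t))$; this is precisely where one uses that $1+t$ has odd degree (is not a square in $\mathbb{F}_2((1/t))$), so that $(1+t)\beta^2$ can never have nonnegative degree unless $\beta$ does. The remaining points --- the characteristic-$2$ algebra in (ii), the convergence of $\beta$, and the reindexing behind $\beta_n=\alpha_n(\delta_{n+1}+\beta_{n+1})$ --- are routine.
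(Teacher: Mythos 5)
You should know at the outset that the paper contains no proof of this statement: Theorem~3.1 is quoted from Baum and Sweet (1977), and the citation attached to it in the text concerns only the equivalence with the differential reformulation, so there is no in-paper argument to compare yours against. Judged on its own, your proof is correct and self-contained. The degree lemma works exactly as you say: because $1+t$ has odd degree, $(1+t)\beta^2$ has odd degree $1+2\deg\beta$, which forces $\deg\beta\le-1$, and then the vanishing of the constant term of $\alpha^2+t\alpha+1$ forces the coefficient of $t^{-1}$ in $\alpha$ to be $1$; the same computation shows a tail can never be $0$ (the equation $1=(1+t)\beta^2$ is impossible since $1/(1+t)$ is not a square), which is precisely what makes your iteration in the converse direction produce an infinite expansion, hence an irrational $\alpha$ with all partial quotients in $\{t,t+1\}$. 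The propagation identity is right in characteristic $2$, including the $\delta=1$ branch where adding $1+t$ and completing the square yields the companion $\beta/\alpha+\delta$; run in reverse it gives exactly $E_n=\alpha_n^2E_{n+1}$ because the two copies of $(1+t)\delta_{n+1}$ cancel, and since $\deg E_n\le 0$ uniformly while $\deg\bigl(\alpha_0\cdots\alpha_{n-1}\bigr)^2=-2n$, the telescoping forces $E_0=0$; your explicit companion $\beta=\sum_{n\ge1}\delta_n\,\alpha_0\cdots\alpha_{n-1}$ converges and visibly lies in $P$. The only items you wave off as routine that merit a sentence in a written version are the convergence of the continued fraction algorithm to $\alpha$ (standard, via the growth of the denominators) and the trivial case $\beta=0$ in the degree lemma. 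Structurally, using one propagation identity in both directions and closing with a valuation estimate is essentially the classical Baum--Sweet mechanism, just organized more symmetrically, and it has the virtue of producing the companion $\beta$ explicitly.
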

This theorem has the following equivalent formulation (see \cite{Hu2022L}
for a proof):
\begin{thmbis}{thm:bs}
	An element $\alpha\in P$ is in $D$ if and only if $\alpha$ satisfies
	$$(\alpha\cdot t(t+1))'=\alpha^2+1.$$
\end{thmbis}

In \cite{Bugeaud2022H} and \cite{Hu2022L}, it was proved that the Thue-Morse 
and period-doubling continued fraction both satisfy the Riccati differential
equation
\begin{equation*}
	(ab(a+b) f)'=(ab)'(1+f^2).
\end{equation*}
We prove that these are in fact particular cases of a more general result:
\begin{prop}\label{riccati}
Let $\bf{u}$ be a sequence taking value in $\{a,b,a+b\}$, where $a$,
	$b$ and $a+b$ (if it is in the image of $\bf{u}$) are non-constant polynomials in $\mathbb{F}_2[t]$,  then the continued 
fraction 
	$$f=[u_0, u_1, \ldots]\in\mathbb{F}_2((1/t))$$
satisfies the Riccati differential equation 
$$(ab(a+b)f)'=(ab)'(1+f^2).$$

\end{prop}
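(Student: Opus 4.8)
The plan is to reduce the statement to the Baum–Sweet criterion (Theorem \ref{thm:bs}$'$) by exploiting the fact that, after dividing by $ab(a+b)$, the Riccati equation is a specialization in the variables $a,b$ of an identity that holds universally. First I would treat $a$ and $b$ as formal variables and observe that the continued fraction $f=[u_0,u_1,\ldots]$ lives in the ring $\ff_2((A))$ with $A=\{a,b,a+b\}$ — but since $a+b$ is already a polynomial in $a,b$, we may as well work in $\ff_2((1/a,1/b))$. The key reduction: set $g := f/(a+b)$ or more precisely rescale so that the partial quotients become, after specialization, elements of degree $1$ in a single variable. Concretely, if we substitute $a\mapsto t$, $b\mapsto t+1$ (so $a+b\mapsto 1$), every partial quotient $u_i$ maps to $t$ or $t+1$, and the hypothesis $\deg a_i=1$ of Baum–Sweet is met; then Theorem \ref{thm:bs}$'$ gives $(\alpha t(t+1))'=\alpha^2+1$ for $\alpha=f|_{a=t,b=t+1}$, which is exactly the asserted equation under this substitution (note $ab(a+b)\mapsto t(t+1)$ and $(ab)'\mapsto (t(t+1))' = 1$, using $(1+t^2)' = 0$ in characteristic $2$... one must be slightly careful here and instead note $(ab)' = a'b+ab' \mapsto (t+1)+t = 1$).

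The heart of the argument is then to promote this single specialization to the full polynomial identity in $\ff_2[a,b]$. I would argue as follows: both sides of $(ab(a+b)f)'=(ab)'(1+f^2)$ are well-defined elements of $\ff_2(a,b)((1/a,1/b))$ once we fix which letter each $u_i$ is, and $f$ is algebraic over $\ff_2(a,b)$ (indeed $\CF(\mathbf u)$ of a $2$-automatic sequence is algebraic by the machinery recalled in Section \ref{ss:aut}, or directly by Theorem \ref{thm:main} in the ultimately periodic case — though for a general automatic $\mathbf u$ one invokes Christol's theorem). Write $D:=ab(a+b)f$ and $E:=(ab)'(1+f^2)$; I want $D'=E$. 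Since $f$ satisfies a polynomial $Q(a,b,f)=0$ over $\ff_2$, differentiating gives a linear relation expressing $f'$ (hence $D'-E$) as a rational function $\Phi(a,b,f)\in\ff_2(a,b)(f)$. The claim is that $\Phi\equiv 0$. To see this, specialize $(a,b)\mapsto(t,t+1)$: all the algebraic relations specialize (the specialization map $\ff_2[a,b]\to\ff_2[t]$ is a ring homomorphism and extends continuously to the power-series completions, as in Section \ref{ss:1}), and the Baum–Sweet theorem tells us $\Phi(t,t+1,\alpha)=0$. But a rational expression in $f$ over $\ff_2(a,b)$ that vanishes after this one generic substitution — generic because $t\mapsto(t,t+1)$ is a dominant morphism onto a curve, and $\Phi(a,b,f)$ reduced modulo the minimal polynomial of $f$ has bounded degree in $f$ — must vanish identically, provided we know $f$ does not satisfy a lower-degree relation specializing badly. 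The cleanest way to make this rigorous is: clear denominators so that $D'-E = \Psi(a,b,f)$ with $\Psi\in\ff_2[a,b,x]$ of degree in $x$ less than $\deg_x Q$; then $\Psi(t,t+1,\alpha)=0$ together with irreducibility of $Q(t,t+1,x)$ forces $Q(t,t+1,x)\mid\Psi(t,t+1,x)$, hence $\Psi(t,t+1,x)=0$ in $\ff_2(t)[x]$; finally conclude $\Psi=0$ in $\ff_2(a,b)[x]$ by noting that the coefficients of $\Psi$ (polynomials in $a,b$) vanish on the curve $b=a+1$, so they are divisible by $a+b+1$, and iterating — or better, picking a second specialization, e.g. $a\mapsto t$, $b\mapsto t^2$ or scaling $a\mapsto \lambda t, b\mapsto \mu t$, to pin down all coefficients.

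The main obstacle I anticipate is precisely this last descent: ensuring that one generic specialization suffices, or organizing a small family of specializations from which the coefficients of $\Psi\in\ff_2[a,b][x]$ can be recovered. A slicker route that sidesteps the descent entirely would be to redo the Baum–Sweet argument \emph{directly} with $a,b$ as variables: the only place characteristic $2$ and the degree-$1$ hypothesis enter is in the identity governing how $[u_0,u_1,\ldots]$ transforms under the shift $f\mapsto u_0+1/f$, namely that $h:=ab(a+b)f$ satisfies $h' = (ab)'(1+f^2)$ is preserved by $f\mapsto u_i + 1/f$ for $u_i\in\{a,b,a+b\}$ — this is a one-line verification using $(ab)' = a'b+ab'$ and the three cases $u_i^2+$ (the relevant cross term) collapsing correctly, exactly as in Lemma 3.2 of \cite{Hu2022L}. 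I would then combine this invariance with convergence of the truncations $[u_0,\ldots,u_n]$ in the norm of Section \ref{ss:1} and continuity of $f\mapsto f'$, to pass to the limit. I expect the shift-invariance computation to be the technical crux, but entirely mechanical once the three cases $u_i=a$, $u_i=b$, $u_i=a+b$ are written out.
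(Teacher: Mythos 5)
Your primary plan (specialize $a\mapsto t$, $b\mapsto t+1$, invoke Baum--Sweet, then descend to the general identity) has genuine gaps. First, the proposition is stated for an \emph{arbitrary} sequence $\mathbf{u}$ with values in $\{a,b,a+b\}$: no automaticity is assumed, so $f$ need not be algebraic over $\ff_2(a,b)$, there is no minimal polynomial $Q$ to differentiate, and the whole reduction of $D'-E$ to a bounded-degree expression $\Psi(a,b,f)$ collapses. Second, the specialization sends $a+b$ to the constant $1$, so whenever $a+b$ occurs in $\mathbf{u}$ the specialized continued fraction has a constant partial quotient and the Baum--Sweet hypothesis $\deg a_i=1$ fails; moreover $f=[u_0,u_1,\ldots]$ has a nonzero polynomial part, hence is not in $P$, so Theorem \ref{thm:bs}$'$ needs an extra inversion step you do not address. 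Third, even granting algebraicity, the descent you acknowledge as the obstacle is not available: vanishing of the coefficients of $\Psi$ on the curve $b=a+1$ only yields divisibility by $a+b+1$, iterating requires a vanishing statement for the quotient that nothing provides, and a second specialization such as $a\mapsto t$, $b\mapsto t^2$ cannot be fed into Baum--Sweet because the resulting partial quotients have degree $2$. So the main route cannot be completed as described.

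Your fallback (``redo the argument directly with the three cases $u_i\in\{a,b,a+b\}$'') is in substance the paper's proof, but the limit step as you state it is incorrect: the truncations $[u_0,\ldots,u_n]$ do \emph{not} satisfy the Riccati equation, since already $E(u):=(ab(a+b)u)'+(ab)'(1+u^2)=(ab)'$ for $u\in\{a,b,a+b\}$, which is nonzero in general; hence ``invariance plus convergence of truncations plus continuity of differentiation'' does not conclude. What is needed is explicit error bookkeeping. The paper proves by induction on the convergents $P_n/Q_n$ (three cases for $u_n$, together with $P_{n-1}Q_{n-2}+P_{n-2}Q_{n-1}=1$) that $ab(a+b)P_nQ_n+ab(P_n^2+Q_n^2)=ab+g_n^2$ with $g_n$ a polynomial, so the defect of the $n$-th convergent equals $(ab)'/Q_n^2$, which tends to $0$. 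Alternatively, one can verify the exact identity $E(u+1/g)=E(g)/g^2$ and apply it to the tails $[u_n,u_{n+1},\ldots]$ of the continued fraction together with a norm estimate. Either way, your three-case computation must be accompanied by this quantitative error (or tail) analysis; without it the passage to the limit is a gap.
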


\begin{proof}[Proof of Proposition \ref{riccati}]
	Let $P_n/Q_n$ $(n\geq -1)$ be the convergents of $f$. Recall that
	$$(P_0,Q_0)=(u_0,1),\quad (P_{-1},Q_{-1})=(1,0)$$
	and 
	\begin{equation}\label{eq:pq}
	P_n=u_nP_{n-1}+P_{n-2},\quad Q_n=u_nQ_{n-1}+Q_{n-2}\quad \mbox{ for } n\geq 1.
	\end{equation}
	Then we only have to prove that 
	$$\lim_n (ab(a+b)f_n)'-(ab)'(1+f_n^2)=0,$$
	where $f_n=\frac{P_n}{Q_n}.$

	Define
	$$F_n:=ab(a+b)P_nQ_n+ab(P_n^2+Q_n^2),$$
	then
	$$(ab(a+b)f_n)'-(ab)'(1+f_n^2)=\frac{F_n'}{Q_n^2}.$$
	We prove by induction that for all $n\geq 0$,
	\begin{equation}F_n=ab+g_n^2\label{dag} \end{equation} for some 
	$g_n\in \mathbb{F}_2[a,b]$. 

	It is easy to check that the claim is true for $n=-1, 0$. Suppose now 
	$n>1$ and that the claim holds for $n-1$ and $n-2$.
	From the recurrence relations \eqref{eq:pq} we deduce
	\begin{align*}
		F_n =u_n^2F_{n-1}+F_{n-2}+u_nab(a+b)(
	P_{n-1}Q_{n-2}+P_{n-2}Q_{n-1}).
	\end{align*}
	Notice that $P_{n-1}Q_{n-2}+P_{n-2}Q_{n-1}=1$ by a basic property of 
	convergents, and
	if $u_n=a$ or $u_n=b$, $u_nab(a+b)=a^2b^2+u_n^2ab$, if $u_n=a+b$, 
 $u_nab(a+b)=u_n^2ab$.
	We have 
	$$F_n=ab+ u_n^2 g_{n-1}^2+ g_{n-2}^2+a^2b^2$$
	if $u_n=a$ or $u_n=b$, and
	$$F_n=ab+ u_n^2 g_{n-1}^2+ g_{n-2}^2$$
	if $u_n=a+b$.
	Thus we have proven \eqref{dag}.

	Therefore
	$$\frac{F_n'}{Q_n^2}= \frac{a'b+b'a}{Q_n^2}.$$
	Since the valuation of $1/Q_n$ in $1/t$ is at least $n$, the above
	term converges to $0$ in $\mathbb{F}_2((1/t))$ as $n$ goes to infinity.
\end{proof}

\section{Acknowledgement}
I would like to thank Alain Lasjaunias (who has written a partial presentation on this subject intended for a private public) for valuable discussions.
I would also like to thank Jean-Paul Allouche for his help.

\bibliographystyle{plain}

\bibliography{article}

\end{document}